\documentclass[sn-chicago,Numbered,iicol]{sn-jnl}

\usepackage{tikz,pgfplots}
\usetikzlibrary{shapes,arrows,positioning}
\usetikzlibrary{shapes.geometric, arrows, shadows}
\usetikzlibrary{fit,backgrounds}
\usepackage{graphicx}%
\usepackage{multirow}
\usepackage{multicol}
\usepackage{amsmath,amssymb,amsfonts}%
\usepackage{amsthm}%
\usepackage{longtable}
\usepackage{geometry}
\usepackage{mathrsfs}%
\usepackage[title]{appendix}%
\usepackage[utf8]{inputenc}
\usepackage{xcolor}%
\usepackage{textcomp}%
\usepackage{manyfoot}%
\usepackage{booktabs}%
\usepackage{algorithm}%
\usepackage{algorithmicx}%
\usepackage{hhline}
\usepackage{algpseudocode}%
\usepackage{listings}%

\hypersetup{
    colorlinks=true,
    linkcolor=black,
    filecolor=magenta,      
    urlcolor=[rgb]{0.3,0.65,0.93},
    pdftitle={Overleaf Example},
    pdfpagemode=FullScreen,
    }
\makeatletter
\newcommand*\bigcdot{\mathpalette\bigcdot@{.6}}
\newcommand*\bigcdot@[2]{\mathbin{\vcenter{\hbox{\scalebox{#2}{$\m@th#1\bullet$}}}}}
\newcommand{\X}{\mathbb{X}}
\newcommand{\K}{\mathcal{K}}

\def\NN{\mbox{I\hspace{-.1em}N}}
\def\R{\mbox{I\hspace{-.1em}R}}
\def\C{\mbox{I\hspace{-.55em}C}}

\newtheorem{remark}{Remark}%
\newtheorem{prop}{Proposition}
\newtheorem{defn}{Definition}

\begin{document}

\title[Article Title]{A numerical Koopman-based framework to estimate regions of attraction for general vector fields}

\author*[1]{\fnm{François-Grégoire} \sur{Bierwart}}\email{francois-gregoire.bierwart@unamur.be}

\author[1]{\fnm{Alexandre} \sur{Mauroy}}

\affil[1]{\orgdiv{Department of Mathematics} and \orgname{Namur Institute for Complex Systems (naXys), University of Namur}, \orgaddress{\country{Belgium}}}

\abstract{In this paper, we develop a comprehensive framework to estimate regions of attraction of equilibria for dynamics associated with general vector fields. This framework combines Koopman operator-based methods with rigorous validation techniques. A candidate Lyapunov function is constructed with approximated Koopman eigenfunctions and further validated through polynomial approximation, either with SOS-based techniques or with a worst-case approach using an adaptive grid. The framework is general, not only since it is adapted to non-polynomial vector fields, but also since the Koopman operator can be approximated with general bases yielding non-polynomial Lyapunov functions. The performance of the method is illustrated with several numerical examples.}

\keywords{Koopman operator, Stability analysis, Region of attraction, Nonlinear systems, Lyapunov function.}

\maketitle

\section{Introduction}

It is undeniable that stability properties are crucial in systems analysis and control design. In particular, global stability---as opposed to local/linear stability---is directly related to the estimation of regions of attraction (ROA), a problem which plays a key role in safety-critical applications. Although global stability is a basic property of dynamical systems, yet estimating the ROA of an equilibrium point is a challenging task. The main (and almost unique) technique to do so relies on the existence of a Lyapunov function, i.e. a positive function that decreases along every trajectory generated by the dynamics \cite{KhalilNonlinearControl}. Lyapunov function design is generally not systematic and the validation of the obtained function, when required, can be difficult. These issues are particularly limiting when the vector fields describing the dynamics are not polynomial, a situation which is prevalent in most real-life applications.  

The most popular systematic technique to construct valid Lyapunov functions relies on sum-of-squares (SOS) polynomials, which are non-negative polynomials that can be used to encode specific Lyapunov inequalities \cite{parrilo2003semidefinite}. Through this framework, the Lyapunov function design problem can be recast as an efficient semi-definite program, and is currently implemented in off-the-shell toolboxes (see e.g. \cite{papachristodoulou2013sostools}). Obviously, SOS-based stability methods are primarily tailored to polynomial vector fields, but they have been extended to non-polynomial vector fields at the cost of limited applicability \cite{papachristodoulou2005analysis, chesi2009estimating} or conservativeness due to approximation bounds \cite{bauml2009stabilization, wu2014domain}. Besides the SOS method, different approaches have been developed, such as vector field interpolation (\cite{saleme2011estimation}), solution to the Zubov's equation \cite{vannelli1985maximal,meng2024koopman}, construction of piecewise linear Lyapunov functions through linear programming (\cite{julian1999parametrization}), and collocation kernel-based methods (\cite{giesl2007meshless,giesl2016approximation}), to list a few. We refer to \cite{giesl2015review} for a general overview of existing methods for constructing Lyapunov function.

In recent years, neural network based methods have gained increasing interest in the context of stability, allowing to learn Lyapunov functions either for polynomial vector fields or general vector fields \cite{chang2019neural,ravanbakhsh2019learning}. In this context, the validation step usually relies on SMT solvers (e.g. Z3, dReal \cite{gao2013dreal}) which might not be adapted to any type of activation functions and can be slow to converge (e.g. during the counter-example process). 

More recently, global stability analysis and Lyapunov function design have been investigated in the Koopman operator framework (see e.g. \cite{budivsic2012applied} for a review). The Koopman (or composition) operator describes the evolution of observable-functions evaluated along the system trajectories \cite{koopman1931hamiltonian}. This framework provides a linear description of nonlinear dynamics in the so-called lifted state space, a description which is amenable to stability analysis. In particular, it is shown in \cite{mauroy2014global} that the eigenfunctions of the Koopman operator can be used to assess global stability in a systematic way. And alternatively, a linear Lyapunov equation expressed in the lifted state variables can be solved efficiently to construct Lyapunov functions \cite[Chapter 3]{mauroy2020koopman}. Importantly, these techniques do not require specific assumptions on the form of the vector field, which does not need to be polynomial.  Moreover, they allow to construct a wide range of Lyapunov functions such as polynomials, trigonometric, or exponential functions.

While the Koopman operator leads to a powerful framework to obtain Lyapunov functions in a systematic way, this comes with a cost. Since the operator is infinite-dimensional, it is typically approximated in a finite-dimensional subspace spanned by a predefined set of basis functions. It follows that the functions obtained with those approximations are only candidate Lyapunov functions, and should therefore be validated. This validation step is particularly crucial in the context of ROA estimation, but has not been performed in a comprehensive and rigorous way in the original works \cite{mauroy2014global,mauroy2020koopman}. In this context, there is a need for combining Koopman operator methods allowing to obtain Lyapunov functions candidates through efficient linear algebraic techniques, with powerful validation methods such as SOS techniques. This has been attempted recently in a data-driven context, where validation techniques (e.g. SOS methods, neural networks) are based on the assumption that snapshot trajectories are available and can possibly be generated at will \cite{bramburger2023auxiliary,deka2022koopman,Mauroy2023data}.

In this paper, we push further the results of \cite{mauroy2014global,mauroy2020koopman} by developing a comprehensive numerical framework for ROA estimation of equilibria in the case of general dynamical systems. We consider a classical non data-driven setting, where we forbid the use of simulated trajectories, but assume a perfect knowledge of the possibly non-polynomial vector field. We tackle the problem by combining a Koopman-based Lyapunov design method with rigorous validation techniques. The main validation method leverages SOS-based techniques, which are solely used to validate the Lyapunov function and estimate the ROA, while the Lyapunov function candidate is obtained in a prior step through the Koopman operator framework. Another proof-of-concept validation method is proposed and based on a ``worst-case" approach combined with an adaptive grid. Our whole framework is adapted to non-polynomial vector fields through the use of polynomial approximations (Taylor approximation and minimax approximation). Moreover, it also allows to use non-polynomial basis functions (e.g. Gaussian radial basis functions), which have the potential to provide better approximations of the Koopman operator, and therefore better (non-polynomial) candidate Lyapunov functions. Also, with appropriate basis functions, our approach can potentially be used to design Lyapunov function candidates for high-dimensional systems. However, the proposed validation methods suffer from the curse of dimensionality and can only be used with systems of dimension less than or equal to three, which will be referred to as low-dimensional systems\footnote{Systems of dimension greater or equal to 10 will be referred to as high-dimensional systems throughout the paper.} Our numerical ROA estimation method is implemented in the open-access (Matlab) toolbox KOSTA, which is available at \url{https://github.com/FgBierwart/KOSTA-Toolbox}.  

The rest of the paper is organized as follows. In Section \ref{preliminaries}, we introduce the Koopman operator framework in the context of stability analysis. In Section \ref{validation-section}, we present the problem of Lyapunov function validation through SOS programming and through an adaptive grid based on a worst-case approach. We also describe how an inner approximation of the region of attraction is computed. Numerical examples are presented in Section \ref{Numerical-section}, for both polynomial and non-polynomial vector fields. Concluding remarks and perspectives are given in Section \ref{sec:conclu}.

\section{Koopman formalism for Lyapunov function design}\label{preliminaries}

In this section, we briefly introduce the problem of estimating regions of attraction of equilibria. We also describe the general methodology relying on Lyapunov functions computed with the Koopman operator framework.

\subsection{Regions of attraction} 

Consider the dynamical system
\begin{equation}\label{eq}
\dot{x} = F(x), \quad x\in\X
\end{equation}
where $\X\subset\R^n$ is a compact set and the vector field $F$ is Lipschitz continuous but otherwise general (e.g. non-polynomial). We denote by $\varphi^{t}(x) : \R^+ \times \mathbb{X} \rightarrow \mathbb{X}$ the flow map generated by (\ref{eq}). Moreover, we assume that the system (\ref{eq}) admits a locally stable equilibrium at the origin.

We will aim at computing an inner approximation of the region of attraction $\mathcal{R}$ of the equilibrium, defined by
$$
\mathcal{R}=\{x\in\X \mid \lim_{t\rightarrow\infty}\varphi^{t}(x) = 0\}.
$$
To do so, we will rely on classic stability theory \cite{KhalilNonlinearControl} and construct Lyapunov functions $V: \X \longrightarrow \X$ which satisfy $V(x)>0$ and $\dot{V}(x) = \nabla V(x)^{\top}F(x) < 0$ for all $x\in \mathcal{R} \setminus\{0\}$, and $V(0)=0$. Note that, in practice, we will obtain approximated Lyapunov functions that satisfy the condition $\dot{V}<0$ only over a subset $\mathcal{S} \subseteq \mathcal{R}$. We call this set $\mathcal{S} = \{x\in \X~|~ \dot{V}(x) < 0\}$ the \emph{validity region} of the Lyapunov function. Let $\Omega_{\gamma} = \{x\in \X \mid V(x)\leq \gamma\}$ be a sublevel set of $V$ for some $\gamma \geq 0$, and let $\partial\Omega_{\gamma} = \{x\in \X \mid V(x)= \gamma\}$ be the boundary of $\Omega_{\gamma}$. Then, an approximation of the ROA is given by the set $\Omega_{\gamma_2}$ if there exists $0\leq \gamma_1<\gamma_2$ such that $\Omega_{\gamma_2} \setminus \Omega_{\gamma_1} \subseteq \mathcal{S}$. In this case, any trajectory with an initial condition in $\Omega_{\gamma_2}$ converges to $\Omega_{\gamma_1}$. Note that $\Omega_{\gamma_1}=\{0\}$ if $\gamma_1=0$. Therefore we will aim at solving the following optimization problem:
\begin{equation}
    \label{eq:ROA}
\begin{array}{cl}
\displaystyle \max_{\gamma_1,\gamma_2\geq 0} & \gamma_2-\gamma_1\\
~~\textrm{s.t.} & \overline{\Omega_{\gamma_2} \setminus \Omega_{\gamma_1}} \subseteq \mathcal{S}\\
\end{array}
\end{equation}

\noindent where $\overline{\phantom{S}}$ denotes the closure of a set. 

More accurate approximations of the ROA can also be obtained by combining several Lyapunov functions. This is summarized in the following proposition.\\ 

\begin{prop}
\label{multiple-lyap}
Assume that $\{\gamma_{1}^{(i)}\}_{i=1}^m$ and $\{\gamma_{2}^{(i)}\}_{i=1}^m$ are feasible solutions to \eqref{eq:ROA} for a family of $m$ Lyapunov functions $\{V^{(i)}\}_{i=1}^m$. Then, if $\cup_{i=1}^m \Omega_{\gamma_{1}^{(i)}} \subset \cap_{i=1}^m\Omega_{\gamma_{2}^{(i)}}$, any trajectory starting from $\cup_{i=1}^m \Omega_{\gamma_{2}^{(i)}}$ converges to $\cap_{i=1}^m\Omega_{\gamma_{1}^{(i)}}$.
\end{prop}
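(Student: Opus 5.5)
The plan is a two-stage argument. First we use a single Lyapunov function $V^{(j)}$ to bring the trajectory into $\Omega_{\gamma_1^{(j)}}$. Then the inclusion hypothesis lets every other $V^{(i)}$ take over. The basic tool is a lemma for one function $V$ with feasible pair $\gamma_1<\gamma_2$. The lemma says that $\Omega_{\gamma_2}$ and $\Omega_{\gamma_1}$ are forward invariant, and that every trajectory starting in $\Omega_{\gamma_2}$ enters $\Omega_{\gamma_1}$ in finite time; when $\gamma_1=0$ it converges to $\Omega_{\gamma_1}=\{0\}$ asymptotically instead. Throughout, ``converges to a set'' will mean that the distance to that set tends to zero, which includes entering it in finite time and staying there.

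\emph{Proof of the lemma.} Let $K=\overline{\Omega_{\gamma_2}\setminus\Omega_{\gamma_1}}$. By feasibility, $K\subseteq\mathcal{S}$. The set $K$ is compact, being a closed subset of the compact set $\X$, and $\dot V$ is continuous. Hence, when $\gamma_1>0$, there is $c>0$ with $\dot V\le -c$ on $K$.
\begin{itemize}
\item \emph{Invariance.} The level sets $\{V=\gamma_2\}$ and $\{V=\gamma_1\}$ are both contained in $K$. At any point of these boundaries $\dot V<0$, so a trajectory cannot cross them outward. Hence $\Omega_{\gamma_2}$ and $\Omega_{\gamma_1}$ are forward invariant.
\item \emph{Finite-time entry.} As long as $\varphi^t(x)\in\Omega_{\gamma_2}\setminus\Omega_{\gamma_1}$, we have $V(\varphi^t(x))\le \gamma_2-ct$. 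So the trajectory enters $\Omega_{\gamma_1}$ after time at most $(\gamma_2-\gamma_1)/c$.
\item \emph{Case $\gamma_1=0$.} The standard Lyapunov argument applies. $V$ decreases along the trajectory to some limit $\ell\ge0$. If $\ell>0$, the trajectory stays in the compact set $\{\ell\le V\le\gamma_2\}\subseteq K\setminus\{0\}$, on which $\dot V\le -c'<0$, which is a contradiction. Hence $V\to0$. Since $V>0$ away from $0$ on the compact set $\Omega_{\gamma_2}$, it follows that $\varphi^t(x)\to0$.
\end{itemize}

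\emph{Main argument.} Take $x\in\cup_i\Omega_{\gamma_2^{(i)}}$, say $x\in\Omega_{\gamma_2^{(j)}}$.
\begin{enumerate}
\item By the lemma applied to $V^{(j)}$, there is $T_j$ with $\varphi^t(x)\in\Omega_{\gamma_1^{(j)}}$ for all $t\ge T_j$.
\item By hypothesis, $\Omega_{\gamma_1^{(j)}}\subset\cap_i\Omega_{\gamma_2^{(i)}}$. So $\varphi^{T_j}(x)\in\Omega_{\gamma_2^{(i)}}$ for every $i$.
\item Applying the lemma to each $V^{(i)}$ from the initial point $\varphi^{T_j}(x)$, we get times $T_i$ after which the trajectory remains in $\Omega_{\gamma_1^{(i)}}$.
\item For $t\ge\max_i T_i$, the trajectory therefore lies in $\cap_i\Omega_{\gamma_1^{(i)}}$.
\end{enumerate}
If some $\gamma_1^{(i)}=0$, the corresponding step gives only convergence to $0$. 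However, $0$ lies in every $\Omega_{\gamma_1^{(i)}}$, since $V^{(i)}(0)=0$. Hence the trajectory still converges to the intersection in the distance sense.

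\emph{Main obstacle.} The delicate part is the invariance and finite-time entry in the lemma. They rely on $\dot V<0$ on the closed set $K$, including its boundary level sets; this is exactly why the closure appears in \eqref{eq:ROA}. They also rely on the implicit assumption that trajectories remain in $\X$, where the flow is defined. I would address this by noting that $\Omega_{\gamma_2}\subseteq\X$ and that forward invariance of $\Omega_{\gamma_2}$ keeps the trajectory in $\X$. The degenerate case $\gamma_1^{(i)}=0$ only affects the mode of convergence (asymptotic rather than finite-time), not the structure of the argument.
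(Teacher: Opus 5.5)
Your proposal is correct and follows essentially the paper's route: $\dot V^{(j)}$ attains a negative maximum on the compact set $\overline{\Omega_{\gamma_2^{(j)}}\setminus\Omega_{\gamma_1^{(j)}}}$, which forces finite-time entry into $\Omega_{\gamma_1^{(j)}}$. After that, the inclusion hypothesis lets every other $V^{(i)}$ act, and invariance of the inner sublevel sets keeps the trajectory in their intersection. You differ only in running the second stage in parallel from $\varphi^{T_j}(x)$ instead of chaining the functions, and in actually proving the forward invariance of $\Omega_{\gamma_2^{(i)}}$ and $\Omega_{\gamma_1^{(i)}}$ that the paper's contradiction step and final step take for granted (strictly, only the exit point of a trajectory leaving $\Omega_{\gamma_1}$ is guaranteed to lie in $K$, not the whole level set $\{V=\gamma_1\}$, but that is all your argument uses).
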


\begin{proof}
Let $x\in\cup_{i=1}^m \Omega_{\gamma_{2}^{(i)}}$ and, without loss of generality, assume that $x\in\Omega_{\gamma_{2}^{(1)}}$. Thus, there exists $t_1\geq0$ such that $\varphi^{t_1}(x)\in\Omega_{\gamma_{1}^{(1)}}$. Indeed, assume by contradiction that $\varphi^t(x) \in \overline{\Omega_{\gamma_2^{(1)}} \setminus \Omega_{\gamma_1^{(1)}}}$ for all $t\geq0$. Moreover, the continuous function $\dot{V}^{(1)}$ attains its maximal value $\dot{V}^{(1)}_{max}$ on $\overline{\Omega_{\gamma_2^{(1)}} \setminus \Omega_{\gamma_1^{(1)}}}$, with $\dot{V}^{(1)}_{max}<0$ since $\overline{\Omega_{\gamma_2^{(1)}} \setminus \Omega_{\gamma_1^{(1)}}} \subseteq \mathcal{S}$. It follows that
$$
\begin{array}{rcl}\label{proof}
V^{(1)}(\varphi^t(x)) & = & V^{(1)}(x) + \int_0^t \dot{V}^{(1)}(\varphi^{\tau}(x))~\mathrm{d}\tau,\\[0.2cm]
& < & V^{(1)}(x) -\dot{V}^{(1)}_{max} t
\end{array}
$$
so that $V^{(1)}(\varphi^t(x))<\gamma_{1}^{(1)}$ for $t$ large enough, which is a contradiction.

Since $\cup_{i=1}^m \Omega_{\gamma_{1}^{(i)}} \subset \cap_{i=1}^m\Omega_{\gamma_{2}^{(i)}}$, we have that $\varphi^{t_1}(x)\in\Omega_{\gamma_{2}^{(2)}}$ and, by the same argument as above, there exists $t_2\geq0$ such that $\varphi^{t_2+t_1}(x)\subset\Omega_{\gamma_{1}^{(2)}}$. Since $\Omega_{\gamma_{1}^{(1)}}$ is an invariant set, we have $\varphi^{t_2+t_1}(x)\subset\Omega_{\gamma_{1}^{(1)}} \cap \Omega_{\gamma_{1}^{(2)}}$. Then, we can repeat this scheme for all $i$ and show that there exist $t_1,\dots,t_m\geq0$ such that $\varphi^{t_1+\dots+t_m}(x) \in \Omega_{\gamma_{1}^{(1)}} \cap \cdots \cap \Omega_{\gamma_{1}^{(m)}}$.      
\end{proof}

As we will see below, our numerical method based on the Koopman operator framework can typically provide a wide range of Lyapunov functions. According to the above result, this is useful to improve the estimation of the ROA (see Example 3 in Section \ref{non-poly VF}).

\subsection{Koopman operator framework for stability analysis}\label{Koopman-stability}

Lyapunov functions will be computed through the Koopman operator framework, which allows to leverage linear stability methods for general nonlinear systems.\\ 
\begin{defn}
Let $\mathcal{F}$ be a Banach space of observable functions. We define the Koopman semigroup of linear operators as the family $\{\K_t\}_{t\geq 0}$, $\K_t : \mathcal{F} \rightarrow \mathcal{F}$ such that $
\K_t f = f \circ \varphi^{t},~ \forall f \in \mathcal{F}.
$\\ 
\end{defn}

\noindent It is known that the semigroup of Koopman operators is strongly continuous in $C(\X)$ \cite{Engel_Nagel,mauroy2020koopman}, so that the infinitesimal generator
$$
\mathcal{L}f ~ = ~ \lim_{t \rightarrow 0^{+}}\dfrac{\mathcal{K}_tf - f}{t} ~=~ F\cdot\nabla f
$$
is well-defined for all $f\in C^1(\X)$. We note that, in contrast to the semigroup, the expression of the infinitesimal generator is directly related to the vector field $F$.

Since the Koopman operator is linear, we can consider its spectral properties, which capture the dynamical properties of the underlying nonlinear system \cite{mezic2005spectral}. 
The eigenfunctions $\phi_{\lambda}$ and associated eigenvalues $\lambda \in \C$ of the infinitesimal generator satisfy $\mathcal{L}\phi_{\lambda} = \lambda\phi_{\lambda}.
$
According to the spectral mapping theorem \cite{Engel_Nagel}, they also verify the equality $\mathcal{K}_t\phi_{\lambda} ~ = ~e^{\lambda t}\phi_{\lambda}$. As shown in \cite{mauroy2014global}, the eigenfunctions $\phi_{\lambda}$ associated with eigenvalues such that $\Re\{\lambda\}<0$ capture the stability properties of the dynamics. In particular, if the origin is a locally stable hyperbolic equilibrium, there are $n$ eigenfunctions, which are continuously differentiable in the neighborhood of the origin and associated with the eigenvalues $\lambda_i$ of the Jacobian matrix $JF(0)$ of $F$ computed at $0$. These eigenfunctions are continuous on the ROA $\mathcal{R}$ and allow to construct a generic Lyapunov function
\begin{equation}\label{lyap}
V(x) = \sum_{i=1}^m \alpha_i\left|\phi_{\lambda_i}(x)\right|^2,~~~\forall x\in\mathcal{R}
\end{equation}
with $\alpha_i>0$, $i=1,\dots,m$. Indeed, we can easily verify that $V(x)>0$ for all $x\in \mathcal{R}\setminus\{0\}$ (note that $V(0)=0$ since $\phi_{\lambda_i}(0)=0$) and
$$
\begin{array}{rcl}
\dot{V}(x) & = & \mathcal{L}V (x) = \sum_{i=1}^m \alpha_i  \mathcal{L} \left|\phi_{\lambda_i}(x)\right|^2\\[0.3cm]
& \leq & 2\max_{i \in\{1,\dots,n\}} \Re\{\lambda_i\} V(x) < 0,
\end{array}
$$
for all $ x\in \mathcal{R}\setminus\{0\}$, where we used 
$$
\begin{array}{rcl}
\mathcal{L}|\phi_{\lambda_i}(x)|^2   &=& \displaystyle\lim_{t\rightarrow 0^+} \frac{\mathcal{K}^t|\phi_{\lambda_i}(x)|^2-|\phi_{\lambda_i}(x)|^2}{t} \\[0.4cm]
&=& \displaystyle\lim_{t\rightarrow 0^+} \frac{(e^{2\Re\{\lambda_i\}t} - 1)|\phi_{\lambda_i}(x)|^2}{t}\\[0.6cm]
&=& 2\Re\{\lambda_i\}|\phi_{\lambda_i}(x)|^2.
\end{array}
$$
Note also that several Lyapunov functions can be obtained by tuning the values $\alpha_i$. By default, we will set $\alpha_i=1$ for all $i$.

\subsection{Systematic computation of Lyapunov functions} \label{method}

The Koopman operator is defined on an infinite-dimensional space, so that the computation of its eigenfunctions needed to obtain the Lyapunov function is not straightforward. A common way to circumvent this issue is to compute a finite-dimensional approximation of the Koopman operator (see e.g. \cite[Chapter 1]{mauroy2020koopman} for an overview). Let $\mathcal{F}_{N}$ be a finite-dimensional subspace of $\mathcal{F}$ spanned by a set of $N$ basis functions $\{\psi_j(x)\}_{j=1}^{N}$. A typical choice is the subspace of monomials $\mathcal{F}_P^d = \{x_1^{a_1}\ldots\,x_n^{a_n} \mid \sum_i a_i \leq d, a_i \in \NN \}$ of total degree less or equal to $d \in \NN$. The use of monomials is particularly well-suited to low-dimensional systems ($n\leq 3$), but other types of basis functions (e.g. Gaussian radial basis functions, see Section \ref{Numerical-section}) might be preferred in higher-dimensional settings. Note also that the choice of basis functions can potentially affect the quality of the results, even for low-dimensional systems. We also use a projection operator $\Pi : \mathcal{F}\rightarrow\mathcal{F}_{N}$ that maps any function $f \in \mathcal{F}$ onto the finite-dimensional space $\mathcal{F}_{N}$. An appropriate choice of projection operator depends on the type of vector field and basis functions. In particular, we observe that $\mathcal{L}\psi_i = \nabla \psi_i^{\top} F$ is polynomial if the vector field is polynomial and if the basis functions are monomials. In this case, it is convenient to use a finite section method associated with the truncation projection operator
    $$
\Pi(x_1^{a_1}\ldots\,x_n^{a_n}) = 
\begin{cases}
     x_1^{a_1}\ldots\,x_n^{a_n} & \text{if } \sum_i a_i \leq d,\\
     0 & \text{otherwise}. 
\end{cases}
$$
In other cases, we will rely on the standard orthogonal $L^2$ projection
$$
\Pi f = \operatorname*{argmin}_{g \in \mathcal{F}_N} \int_{\X_\Pi} \left|f-g\right|^2 dx
$$
with $\X_\Pi=[-0.1,0.1]^n$ by default. This choice is motivated by a better approximation of the eigenvalues of the Jacobian matrix at the origin, but may depend on the dynamics. In practice, the projection is obtained with a Galerkin method, where inner products are approximated through a Monte-Carlo technique. In the specific case of (Gaussian) radial basis functions and Monte-Carlo samples taken as the centers of these functions, the approximation boils down to a kernel-based method (see \cite{williams2014kernel}).

Next, an approximation of the infinitesimal generator $\mathcal{L}$ is given by 
$
\mathcal{L}_{N} ~:=~ \Pi\,\mathcal{L}\hspace{-0.12cm}\mid_{\mathcal{F}_{N}} ~:~ \mathcal{F}_{N} \rightarrow \mathcal{F}_{N},
$
where $\,\mathcal{L}\hspace{-0.12cm}\mid_{\mathcal{F}_{N}}$ is the restriction of $\mathcal{L}$ to $\mathcal{F}_{N}$. For an arbitrary function $f \in \mathcal{F}_{N}$, it holds that
$$
f(x) = \sum_{i=1}^{N}a_i\psi_i(x) := a^{\top}\Psi(x)
$$
and
$$(\mathcal{L}_{N}f)(x) = \sum_{i=1}^{N}b_i\psi_i(x) := b^{\top}\Psi(x)
$$
with $\Psi(x) = \left(\psi_1(x),\ldots,\psi_{N}(x)\right)^{\top}$, and where $a:=(a_i,\ldots,a_{N})^{\top}$ and $b:=(b_i,\ldots,b_{N})^{\top}$ are the coefficients of the expansion of $f$ and $\mathcal{L}_{N}f$, respectively, in the basis functions. It follows that we can define a matrix representation $\mathbf{L}_{N} \in \R^{{N}\times {N}}$ of $\mathcal{L}$ such that
$$
\mathbf{L}_{N}\,a~=~b,
$$
where the $i$-th column $\mathbf{c}_i$ of $\mathbf{L}_{N}$ contains the coefficient of the expansion of $\mathcal{L}_{N}\psi_i$ in the basis. Indeed, by taking the $i$-th canonical vector $a = \mathbf{e}_i$, we obtain $\mathcal{L}_{N}\psi_i = (\mathbf{L}_{N}\mathbf{e}_i)^{\top}\Psi(x)=\mathbf{c}_i^{\top}\Psi.$
One can easily see that the eigenfunctions $\hat{\phi}_{\hat{\lambda}_i}$ of $\mathcal{L}_{N}$, associated with the eigenvalues $\hat{\lambda}_i$, are given by the right eigenvectors $v_i$ of $\mathbf{L}_{N}$, i.e. $\hat{\phi}_{\hat{\lambda}_i}(x) = v_i^T \Psi(x)$ (see  \cite{mauroy2020koopman} for more details). They can be used to approximate the true eigenfunctions $\phi_{\lambda_i}$ of the infinitesimal generator $\mathcal{L}$, provided that $\lambda_i \approx \hat{\lambda}_i$, and therefore to construct a candidate Lyapunov function of the form \eqref{lyap}. In practice, we will focus on the so-called principal eigenfunctions, that is, the approximated eigenfunctions $\hat{\phi}_{\hat{\lambda}_i}$ will be selected so that the associated eigenvalues $\hat{\lambda}_i$ are close to the eigenvalues of the Jacobian matrix $J_F(0)$. Since there is no guarantee that these eigenfunctions approximate well those of $\mathcal{L}$, a validity region should be computed for the candidate Lyapunov function (see Section \ref{validation-section}).\\

\begin{remark}
When the vector field $F$ is analytic and $\mathcal{F}_N$ is spanned by monomials, it is well-known that $\sigma(J_F(0)) \subset \sigma(\mathbf{L}_{N})$ and, for any $\lambda\in\sigma(J_F(0))$, $\hat{\phi}_{\lambda} = \Pi \phi_{\lambda}$ where $\Pi$ is the truncation projection operator. Moreover, when the eigenvalues of $J_F(0)$ are non-resonant, it follows from Poincaré linearization theorem \cite{gaspard1995spectral} that the eigenfunctions are analytic. This ensures that the eigenfunctions are approximated by their truncated Taylor series. In the disk of analyticity, approximated eigenfunctions converge (pointwise) to the exact ones in the limit of an infinite number of basis monomials, and therefore the candidate Lyapunov function also converges to an exact Lyapunov function. However, the convergence is not guaranteed beyond the disk of analyticity, which might restrict the size of the inner approximation of the region of attraction.
\end{remark}

\subsection{Polynomial approximations}\label{Polapprox-section}

The validation methods described in the next section are valid for polynomial Lyapunov functions. They also require a polynomial vector field or, alternatively, a polynomial approximation of it. In this work, we will rely on two different polynomial approximations, that we will use to obtain a polynomial proxy of the Lyapunov function and of the vector field.

\paragraph{Taylor approximation.}
A polynomial approximation of a function $f$ can be obtained with its Taylor expansion $P(x)$ up to some order $s\in\NN$ (assuming that $f$ is smooth enough). In this case, the approximation error is given by the Lagrange remainder, which implies that there exists a positive constant $c$ such that 
\begin{equation}\label{ci}
|f(x)-P(x)| \leq c\|x\|_2^{s+1}.
\end{equation}
If the approximation is used with the vector field, it is assumed that (an upper bound on) the constant $c$ is known. Such a constant can be obtained empirically in a possibly conservative way

\paragraph{Minimax polynomial approximation.}
This method seeks for the best polynomial approximation $P^* \in \mathcal{F}_P^d$ of a function $f$ over the set $\X$, i.e. 
\begin{equation}\label{Remez}
P^*  = \operatorname*{argmin}_{P\in\mathcal{F}_P^d} \max_{x \in \X}|f(x)-P(x)|.
\end{equation}
Computing the solution of the above minimax problem is quite challenging. Alternatively, one can consider its discrete relaxation over a finite set of $m$ points $\mathcal{X}=\{x_k\}_{k=1}^m \subset \X$, which yields a linear program that can easily be solved. In this paper, we use in particular the Remez-type algorithm presented in \cite{watson1975multiple}, which is initialized with the set $\mathcal{X}$ containing Chebyshev nodes.
The solution to the relaxed problem provides a lower bound $\bar{\varepsilon}=\max_{k}|f(x_k)-P^*(x_k)|$ but their might be some (randomly chosen) points $\Tilde{x}$ in $\X$ such that $|f(\Tilde{x})-P^*(\Tilde{x})|>\bar{\varepsilon}$. Then the algorithm proceeds by adding those points $\Tilde{x}$ to $\X$ and recomputes the approximation $P^*$ until a stopping criterion is satisfied. The error $\bar{\varepsilon}$ provides a lower bound on the true error $\varepsilon = \|f-P^*\|_{\infty}$ that may be tight provided that the final set $\mathcal{X}$ is large enough. If needed (i.e. in the case of vector field approximation), a safety margin can still be added in order to ensure that $\varepsilon$ is overestimated. By default, we set $1.5\bar{\varepsilon}$. Note that an error bound on the vector field approximation could also be obtained and validated through SMT methods such as dReal \cite{gao2013dreal}. However, for high degree approximations, this can be computationally expensive compared to the approach described above.\\

\noindent Polynomial approximations are used in two cases:
\begin{itemize}
    \item[(a)] \textbf{Non monomial basis functions.} When general basis functions are used (e.g. Gaussian radial basis functions), the (non polynomial) Lyapunov function computed through the Koopman approach is approximated with the minimax method so that a polynomial proxy is obtained and processed as a candidate Lyapunov function in the validation step. Note that the approximation error does not need to be computed here since the candidate is further validated.
    \item[(b)] \textbf{Non polynomial vector fields.} A polynomial (Taylor or minimax) approximation of general vector fields can be used (i) directly before computing the Lyapunov function, which allows to use monomial basis functions along with the truncation operator (option 1), or (ii) after the computation for the validation step only, i.e. to obtain a polynomial approximation of $\dot{V}=\nabla V^T F$ (option 2). Note that the approximation error has to be computed for the validation step.
\end{itemize}

\noindent The flowchart in Figure \ref{fig:flowchart} depicts the use of the polynomial approximations at different stages of the Lyapunov function construction.

\section{Validation of the Lyapunov function and estimation of stability regions} \label{validation-section}

\tikzstyle{block} = [rectangle, draw, fill=blue!8, 
        text width=7em, text centered, rounded corners, minimum height=1em]
\tikzstyle{line} = [draw, -latex,line width=0.2mm]
\begin{figure*}[t]
\begin{center}
\begin{tikzpicture}[scale = 0.9, transform shape,mylabel/.style={thin, draw=black, minimum width=0.5cm, minimum height=4.5cm,fill=white,font=\Large}]

\node [block] (F_nonpol) {Non-polynomial vector field};

\node [block,above=2.5cm of F_nonpol] (F_pol) {Polynomial vector field};

\begin{scope}[on background layer]
\node[rectangle,minimum width=3.3cm,minimum height=4.7cm,fill=gray!10] [fit = (F_nonpol) (F_pol) ,label=above:Vector field type] (bx4) {};
\end{scope}

\node [block, right=2cm of F_pol] (mon_1) {Monomial basis functions};

\node [block, below=0.7cm of mon_1] (other) {Other basis functions};

\node [block, right=2cm of F_nonpol] (mon_2) {Monomials basis functions};

\node [block, right=2cm of mon_1] (Truncation) {Truncation projection};

\begin{scope}[on background layer]
    \node[rectangle,minimum width=3.3cm,minimum height=4.7cm,fill=gray!10] [fit = (mon_1) (other) (mon_2),label={[align=center] Choice of a\\basis function $\{\psi_k\}_{k=1}^N$}] (bx) {};
\end{scope}

\node [block, right=2cm of other] (L2_1) {$L^2$ orthogonal projection};

\node [block, right=2cm of mon_2] (L2_2) {$L^2$ orthogonal projection};

\begin{scope}[on background layer]
\node[rectangle,minimum width=3.3cm,minimum height=4.7cm,fill=gray!10] [fit = (Truncation) (L2_1) (L2_2),label={[align=center] Choice of 
a \\projection operator $\Pi$}] (bxx) {};
\end{scope}

\node [block, right=2cm of Truncation] (V_pol) {Polynomial function $V$};

\node [block, right=2cm of L2_2] (V_nonpol) {Non-polynomial function $V$};

\begin{scope}[on background layer]
\node[rectangle,minimum width=3.3cm,minimum height=4.7cm,fill=gray!10] [fit = (V_pol) (V_nonpol),label={[align=center] Type of \\Lyapunov candidate $V$}] {};
\end{scope}

\node [rectangle,rounded corners,right=0.3cm of F_nonpol,minimum width=1cm] (opt2) {{\footnotesize \textsc{Option 2}}}; 

\path [line,-] (F_nonpol.east) -- (opt2);
\path [line] (F_pol) --  (mon_1);
\path [line] (F_pol.east) -- (other.west);
\path [line] (opt2) -- (other.west);
\path[line] (opt2) -- (mon_2);
\path [line] (mon_1) --  (Truncation);
\path [line] (other) --  (L2_1);
\path [line] (mon_2) --  (L2_2);
\path [line] (Truncation) --  (V_pol);
\path [line] (L2_1.east) --  (V_nonpol.west);
\path [line] (L2_2.east) --  (V_pol.west);

\node[rectangle,text width=6em, text centered, rounded corners, minimum height=4em,below = 0.7cm of F_pol] (empty) {{\footnotesize Taylor/minimax approx. (\textsc{Option 1})}}; 

\node[text width=7em, text centered, rounded corners, minimum height=1em,below = 0.7cm of V_pol] (empty2) {{\small minimax approximation}};

\path[line,dashed,-] (F_nonpol) -- (empty); 
\path[line,dashed] (empty) -- (F_pol);

\path[line,dashed,-] (V_nonpol) -- (empty2); 
\path[line,dashed] (empty2) -- (V_pol);

\end{tikzpicture}
\end{center}\vspace{0.4cm}
\caption{The flowchart shows how the Lyapunov candidate is constructed depending on the nature of the vector field and of the basis functions (solid lines). Polynomial Lyapunov functions are obtained only when the basis functions are monomials. Dashed lines represent the polynomial approximations (Taylor or minimax) that are applied to the vector field or to the Lyapunov function.}
\label{fig:flowchart}
\end{figure*}
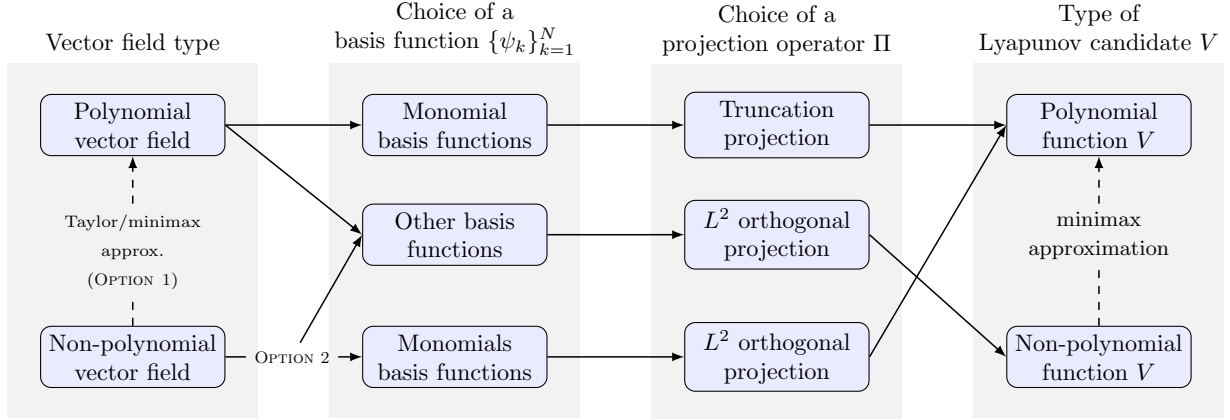

This validation step is the most critical part of the method. In this section, we will develop two validation techniques based on (i) SOS programming and (ii) a ``worst case'' approach combined with an adaptive grid.

The validation methods can be used with general equilibrium dynamics and are based on the polynomial approximations described above. We can rewrite (\ref{eq}) as
$$\dot{x}_i = P_i(x) + (F_i(x)-P_i(x)) \leq P_i(x) + \varepsilon_i(x),$$
for $i=1,\dots,n$, where $P_i(x)$ is a polynomial approximation of the vector field component $F_i$ and $\varepsilon_i(x)$ is a polynomial error term, i.e. $\varepsilon_i(x)=c_i \|x\|^{s+1}$ with $s$ assumed to be odd for the Taylor approximation, and $\varepsilon_i(x)=\varepsilon_i$ is a constant for the minimax approximation. Moreover, we denote by $V$ the candidate Lyapunov function which is assumed to be polynomial (possibly approximated through the minimax method when the basis functions are not polynomial). In order to obtain a rigorous stability certificate, we need to verify that
$$
\begin{array}{rcl}
\dot{V}(x) &=& \nabla V(x)^{\top} F(x)\\
&\leq& \nabla V(x)^{\top} P(x) + \sum_{i=1}^n\left|\dfrac{\partial V}{\partial x_i}\right|\varepsilon_i(x) < 0.
\end{array}$$
\noindent Note that we do not use the Cauchy-Schwartz inequality here, which might be more conservative and lead to polynomial inequalities of higher degree that could be unstable if used with SOS-type methods. The above inequality yields an inner approximation of the validity region \vspace{-0.1cm}
$$
\bar{\mathcal{S}} = \left\{x\in\X \mid (\nabla V^{\top}P)(x) + \displaystyle\sum_{j=1}^n\left|\dfrac{\partial V}{\partial x_j}\right|\varepsilon_j(x) < 0\right\}
$$ 
\noindent where $\bar{\mathcal{S}} \subset \mathcal{S}.$
The set $\bar{\mathcal{S}}$ is not described by a polynomial inequality, but can be rewritten as 
\begin{equation}\label{eq:validity_set}
\begin{split}
    \bar{\mathcal{S}} ~~&= \hspace{-0.6cm} \bigcap_{(r_1,\dots,r_n) \in \{0,1\}^n}\ \Bigl\{ x\in\X \mid \nabla V(x)^{\top} P(x)\\
    &\qquad+ \sum_{j=1}^n (-1)^{r_j}\dfrac{\partial V}{\partial x_j}\varepsilon_j(x) < 0 \Bigr\},\\
    & \triangleq \hspace{-0.6cm} \bigcap_{(r_1,\dots,r_n) \in \{0,1\}^n}\ \Bigl\{ x\in\X \mid R_{(r_1,\ldots,r_n)}(x) < 0 \Bigr\},
    \end{split}
\end{equation}
so that it is a semi-algebraic set.
In the case of the minimax approximation, the set $\bar{\mathcal{S}}$ is likely to be conservative near the origin, where the value $\nabla V(x)^{\top}P(x)$ is dominated by the error term $\varepsilon$. However, in contrast to the Taylor approximation, the minimax approximation is less conservative away from the origin, which is desirable for estimating large regions of attraction. According to Proposition \ref{multiple-lyap}, we can use both approximations to construct two Lyapunov candidates that focus on different regions of the state space (see the last example in Section \ref{Numerical-section}).

\subsection{Sum-of-squares programming}\label{SOS-section}

A region of attraction is obtained by solving the optimization problem \eqref{eq:ROA} with the approximate validity region $\bar{\mathcal{S}}$ instead of $\mathcal{S}$. It is well established that, for polynomial Lyapunov candidates and semi-algebraic sets, this problem can be solved by using SOS programming methods relying on Positivstellensatze relaxation (see e.g. \cite{meng2020application,parrilo2003semidefinite}). 

Considering the approximated validity set \eqref{eq:validity_set}, we can rewrite the optimization problem \eqref{eq:ROA} as 
\begin{equation}
\label{SOS-lyap}
\begin{array}{ll}
\displaystyle \max_{\gamma_1,\gamma_2 \geq 0} & \gamma_2-\gamma_1\\[0.3cm]
~~\textrm{s.t.} & \hspace{-0.3cm}\bigcdot~\sigma_1, \sigma_2\text{ is SOS}\\
& \hspace{-0.3cm}\bigcdot~  \forall \,(r_1,\dots,r_n) \in\{0,1\}^n, P \text{ is SOS, with }\\
& ~~ \hspace{-0.3cm} P = -R_{(r_1,\ldots,r_n)}-\sigma_1(V-\gamma_1) -\sigma_2(\gamma_2-V)\\
&
\end{array}
\end{equation}

\noindent where the constraints ensure that $\overline{\Omega_{\gamma_2} \setminus \Omega_{\gamma_1}} \subset \bar{\mathcal{S}}$. Note that the above SOS problem bears similarity to the problem formulated in \cite{meng2020application}. The main difference is that the Lyapunov function is built during the optimization process in that case, while it is given in advance and only validated with SOS programming in our case. 

Since the problem is bilinear in the variables $\gamma_1$, $\gamma_2$, $\sigma_1$ and $\sigma_2$, it is solved by using a bisection method over $\gamma_1$ and $\gamma_2$ and by solving a feasibility problem with SOS programming for each set of values $\gamma_1$ and $\gamma_2$. In practice, the maximal value $\gamma_2$ is chosen such that the set $\Omega_{\gamma_2}$ lies inside the region of interest $\X$. This prevents non-closed level-sets that could not be used to obtain a valid ROA.

\paragraph{Numerical limitations} 
As illustrated in Section \ref{Numerical-section}, SOS programming provides an efficient way to validate the Lyapunov function and compute large inner approximations of the ROA. However, the polynomials obtained with the Koopman operator method and subsequent approximation techniques might not be well-conditioned in general. This implies that the SOS method can be sensitive over the whole domain $\X$, especially for the choice of multipliers $\sigma_1$ and $\sigma_2$ in (\ref{SOS-lyap}). Moreover, interpretability issues can be encountered in the feasibility problem that is solved at each iteration of the bisection method. Finally, the SOS method is most commonly used with polynomials of total degree \mbox{$d \approx 5$} for low-dimensional systems ($n \leq 3$). Its use in higher-dimensional settings is notoriously prohibitive, which prevents SOS-based validation in that case (see Section \ref{sec:HD}).
 

\subsection{Adaptive grid}

We propose an additional validation method that can overcome some of the above-mentioned limitations of the SOS method, at the cost of reduced performance. This method relies on an adaptive grid, where the validation of each cell is based on a ``worst case'' approach. This technique is similar in spirit to the one described in \cite{bobiti2016sampling,siegelmann2023recurrence}. Although currently developed and implemented for polynomial basis functions, this method is conceptually more general and could be used to validate general Lyapunov functions obtained with non-polynomial basis functions. 

\paragraph{Estimation of the validity region.} We build an adaptive grid where each cell $\mathcal{C}$ is either validated if it can be shown that $\dot{V}(x) <0$ for all $x\in \mathcal{C}$ or is divided into $2^n$ smaller cells otherwise (unless a preset minimal size is reached). An approximation of the validity region is given by the union of all validated cells:
$$
\bar{\mathcal{S}} \supseteq \bigcup_{k\in K_{\textrm{val}}} \mathcal{C}_k
$$
where $K_{\textrm{val}}$ is the set of indices of validated cells.

In order to validate a cell $\mathcal{C} = [x_1,x_1+\delta] \times \cdots \times [x_n,x_n+\delta]$, we consider a worst-case approach. According to \eqref{eq:validity_set}, we first check that $R_{(r_1,\dots,r_n)}(x)<0$, for all $(r_1\dots,r_n) \in \{0,1\}^n$, for all $x \in v(\mathcal{C})$, where $v(\mathcal{C})$ denotes the set of vertices of $\mathcal{C}$. Then, it follows from the mean value theorem that $R_{(r_1,\dots,r_n)}(x)<0$ is satisfied for all $x\in \mathcal{C}$ if
\begin{equation}\label{criterion}
\begin{split}
  &\max_{x \in v(\mathcal{C})} \left(R_{(r_1,\dots,r_n)}(x)\right)\\ 
  &\qquad+ \frac{\sqrt{n} \delta }{2} \max_{x \in \mathcal{C}} \| \nabla R_{(r_1,\dots,r_n)}(x)\| < 0,
  \end{split}
\end{equation}
where $\sqrt{n} \delta/2$ is the maximal distance between the set $v(\mathcal{C})$ of vertices and any point $x\in \mathcal{C}$.

Since $R_{(r_1,\dots,r_n)}$ is a polynomial, the square of the norm of its gradient is also a polynomial (with coefficients $c_\alpha$) that can be bounded by
$$
\begin{array}{rcl}
\displaystyle\max_{x \in \mathcal{C}} \| \nabla R_{(r_1,\dots,r_n)}(x)\|^2 &=& \displaystyle\max_{x \in \mathcal{C}} \sum_{\alpha \in \mathbb{N}^n} c_\alpha x^\alpha \\[0.3cm]
&\leq& \displaystyle\sum_{\alpha \in \mathbb{N}^n}  \displaystyle\max_{x \in v^*(\mathcal{C})} \left(c_\alpha x^\alpha \right)
\end{array}
$$
where we used the triangular inequality and the fact that the extremal values of monomials is attained on the set $v^*(\mathcal{C})=v(\mathcal{C}) \cup \{x\in \mathcal{C}:\exists j \textrm{ s.t. }x_j=0\}.$

\paragraph{Estimation of the ROA.} It remains to compute the largest approximation of the ROA by solving \eqref{eq:ROA} with the validity region $\bar{\mathcal{S}}$ estimated above. Similarly to the previous SOS-based validation method, this is done through a bisection method. At each step, for a given pair of values $(\gamma_1,\gamma_2)$, a set of cells such that their union contains the associated sublevel sets is identified, i.e. one finds the smallest set of indices $K_{\textrm{level}}$ such that
$$
\overline{\Omega_{\gamma_2} \setminus \Omega_{\gamma_1}} \subset \bigcup_{k\in K_{\textrm{level}}} \mathcal{C}_k .
$$
It follows that, if $K_{\textrm{level}} \subseteq K_{\textrm{val}}$, the region $\overline{\Omega_{\gamma_2} \setminus \Omega_{\gamma_1}}$ is contained in $\bar{\mathcal{S}}$, and therefore validated. The set $K_{\textrm{level}}$ is obtained in two steps.\\

\noindent \textsc{First step.} One identifies the cells that (are likely to) contain the level sets $\partial \Omega_{\gamma}$, with $\gamma=\gamma_1$ or $\gamma=\gamma_2$. We start by selecting all the cells $\mathcal{C}$ such that
\begin{equation}\label{crossing-cell}
\min_{x\in v(\mathcal{C})} V(x)<\gamma \quad  \textrm{and} \quad \max_{x\in v(\mathcal{C})} V(x)>\gamma.
\end{equation}
Such cells are shown in blue in  Figure \ref{fig:illustration-grid}. Next, we inspect the neighbors of those cells. Although they do not satisfy the above condition, these neighbor cells can contain the level set $\partial \Omega_{\gamma}$ (see the green cell in Figure \ref{fig:illustration-grid}). This is verified by evaluating the values of $V$ on the boundary $\mathcal{B}_{\mathcal{C}}$ of every neighbor cell $\mathcal{C}$. For practical reasons, only a finite set of points $x_k\in \mathcal{B}_{\mathcal{C}}$ can be considered. If at least one point satisfies $|V(x_k)-\gamma|<\delta \max_{x \in \mathcal{C}}\|\nabla V(x) \|$, where $\delta = \max_{x \in \mathcal{B}_{\mathcal{C}}} \min_k \|x-x_k\|$ is the fill distance of the points in $\mathcal{B}_{\mathcal{C}}$, we assume that the cell can possibly intersect $\partial \Omega_\gamma$ and should therefore be included in the set $K_{\mathrm{val}}$.\\
\begin{figure}[t!]
\centering
\includegraphics[width=0.4\textwidth]{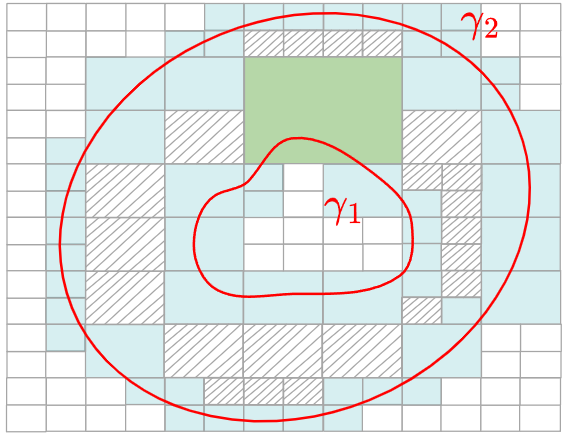}
\vspace{0.3cm}
\caption{Hypothetical example of an adaptive grid obtained through our proposed method. Blue cells contain the level set $\partial\Omega_{\gamma_1}$ or $\partial\Omega_{\gamma_2}$ since \eqref{crossing-cell} holds. In contrast, the green cell does not satisfy \eqref{crossing-cell}, although it intersects the level set $\partial\Omega_{\gamma_1}$. The hatched cells satisfy \eqref{hatched}.}
\label{fig:illustration-grid}
\end{figure} 

\noindent \textsc{Second step.} One selects the remaining cells $\mathcal{C} \subset \overline{\Omega_{\gamma_2} \setminus \Omega_{\gamma_1}}$, which satisfy the condition
\begin{equation}\label{hatched}
    \gamma_1 < V(x) < \gamma_2 \quad \forall x\in v(\mathcal{C}).
     \end{equation}
    These cells are hatched in Figure \ref{fig:illustration-grid}.

The proposed validation methodology is general. Provided that one can compute a bound on $\|\nabla V\|$ and $\| \nabla \dot{V}\|$ over a cell, the method could be directly extended to general basis functions, without polynomial approximation of vector fields and Lyapunov functions. Note also that, similarly to SOS techniques, the method suffers from the curse of dimensionality, being typically limited to low-dimensional systems (i.e. dimension less or equal to $3$). We leave those perspectives for future development.


\section{Numerical examples}\label{Numerical-section}

We now illustrate our method with several examples, considering systems with polynomial and non-polynomial vector fields, as well ashigh-dimensional systems. Even though the method can be used with general basis functions, we focus on two specific cases: 
\begin{enumerate}
    \item Monomial basis functions $\{x^{\alpha_i}, \alpha_i \in \mathbb{N}^n, |\alpha_i|\leq d\}_{i=1}^{N}$ up to total degree $d \in \NN$,\vspace{0.3cm}
    \item Gaussian radial basis functions $\{e^{-\eta^2\|x-c_i\|^2}\}_{i=1}^{N}$ for some parameter $\eta>0$ and centers $\{c_i\}_{i=1}^{N} \subset \mathcal{D}_c$, with $\mathcal{D}_c \subset \X$.
\end{enumerate}

\noindent In all examples, the Lyapunov function is computed and validated for a dynamic rescaled so that the region of interest $\X$ is reduced to the set $[-1,1]^n$. This rescaling is needed to enhance the performance of the SOS methods, which are implemented with the SOSTOOLS toolbox \cite{papachristodoulou2013sostools,prajna2002introducing} and Mosek solver. Note that the degrees of the SOS multipliers in \eqref{SOS-lyap} strongly affect the performance. Some care should be taken to tune them.

Experiments were run on an AMD Ryzen 7 5700U laptop with 8 cores and 16GB of RAM. They took from $5$ seconds (polynomial vector fieds with monomial basis functions) to $2$ minutes (non-polynomial vector field with minimax approximation), for a suitable choice of tolerance parameters (bisection method) and degree of multipliers (SOS method).

\subsection{Polynomial vector field} 
\label{exemple1}

\begin{figure*}[t!]
\centering
    \includegraphics[width=0.45\textwidth]{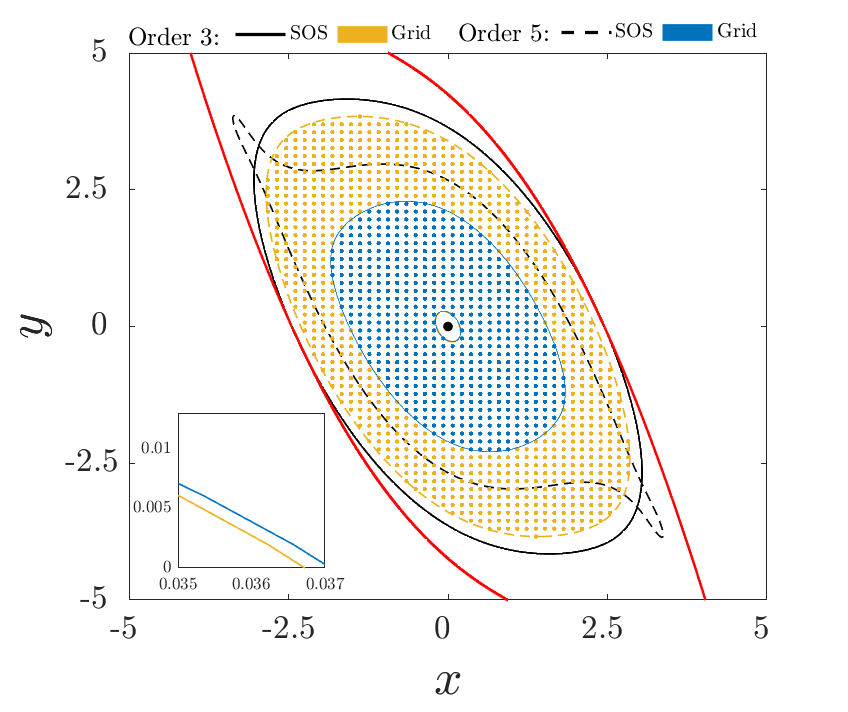}
    \includegraphics[width=0.45\textwidth]{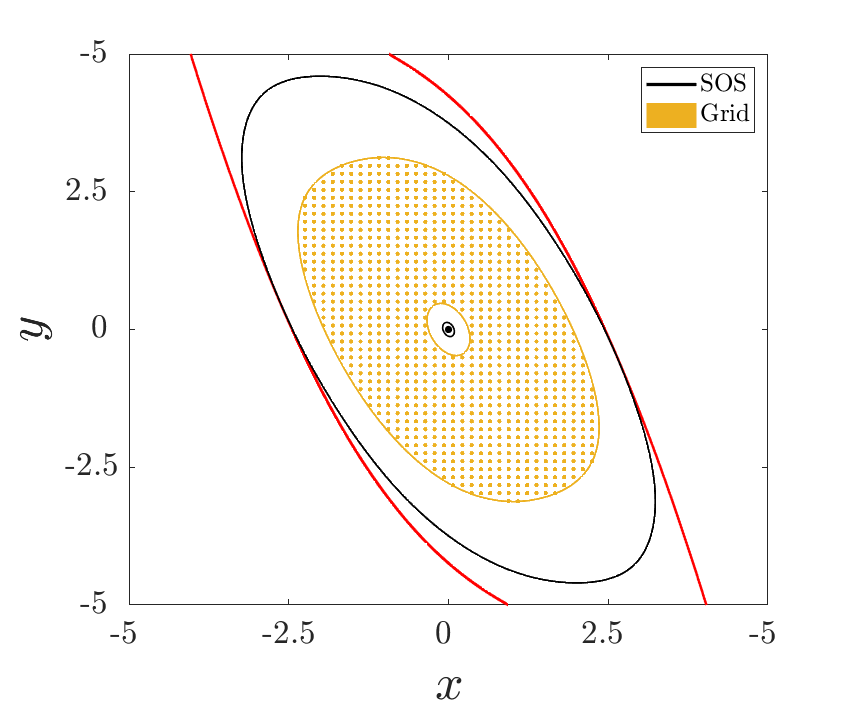}
\caption{Inner approximations of the ROA are obtained for the dynamics (\ref{sys1}). The boundary  of the ROA is in red. (Left) The approximation of the ROA is obtained with monomial basis functions up to degree 3 and 5. The approximation obtained with SOS-based validation is in black while the approximation obtained with the adaptive grid is in yellow (monomials up to degree 3) and blue (monomials up to degree 5). The inset shows the boundary of the estimated ROA near the equilibrium, when it is computed with the adaptive grid. (Right) The approximation of the ROA is obtained with 25 Gaussian radial basis functions with $\mathcal{D}_c = [-1,1]^2$ and $\eta = 0.9$. The approximation obtained with SOS-based validation (performed with a minimax polynomial approximation of degree 12) is in black while the approximation obtained with the adaptive grid is in yellow.}   

\label{fig:Pol-ex}
\end{figure*}

The first example focuses on the case of a polynomial vector field. \paragraph{Example 1} Consider the dynamics (see \cite{mauroy2014global})
\begin{equation}\label{sys1}
\left\{
\begin{array}{rcl}
\dot{x} & = & y  \\
\dot{y} & = & -2x-y+\dfrac{x^3}{3}
\end{array}\right.
\end{equation}
over $\X=[-5,5]^2$, which admit the origin as a stable equilibrium. The ROA is obtained by using a basis of monomials (Figure \ref{fig:Pol-ex}, left) and Gaussian radial basis functions (Figure \ref{fig:Pol-ex}, right). In the first case, monomials are used up to degree 3 and 5. We observe that the sublevel set extracted with the SOS method is consistent with the results shown in \cite{mauroy2014global} (without rigorous validation) and that $\gamma_1$ has been set to 0. The adaptive grid provides a smaller approximation which excludes a small neighborhood of the equilibrium ($\gamma_1 \neq 0$). For the two validation methods, the best approximation is obtained with monomials of degree 3. 
In the second case, 25 Gaussian radial basis functions are used and a minimax polynomial approximation of the Lyapunov function is obtained with polynomials up to degree 12. The approximation of the ROA provides slightly better results than in the case of monomials, when using SOS validation. It is noticeable that, in this case, the Lyapunov function (of degree 12) has almost the same number of nonzero terms as the one computed with monomials up to degree 5. This suggests that Gaussian radial basis functions are well suited for applications to higher-dimensional systems, where the use of monomials is intractable. The approximation obtained with the adaptive grid is not as good but yet similar to the one obtained with monomials up to degree 5.

\subsection{Non-polynomial vector fields}\label{non-poly VF}

\begin{figure*}[t!]
\centering
\includegraphics[width=0.45\textwidth]{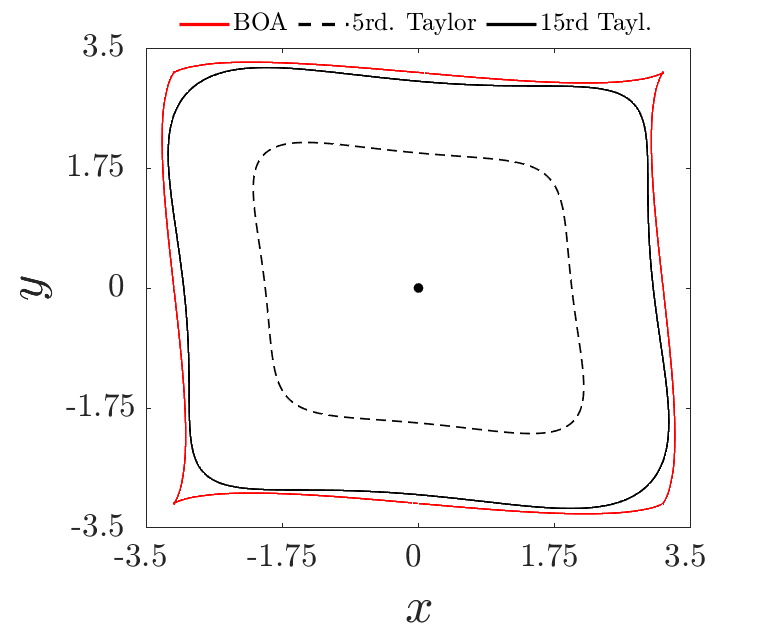}
\includegraphics[width=0.45\textwidth]{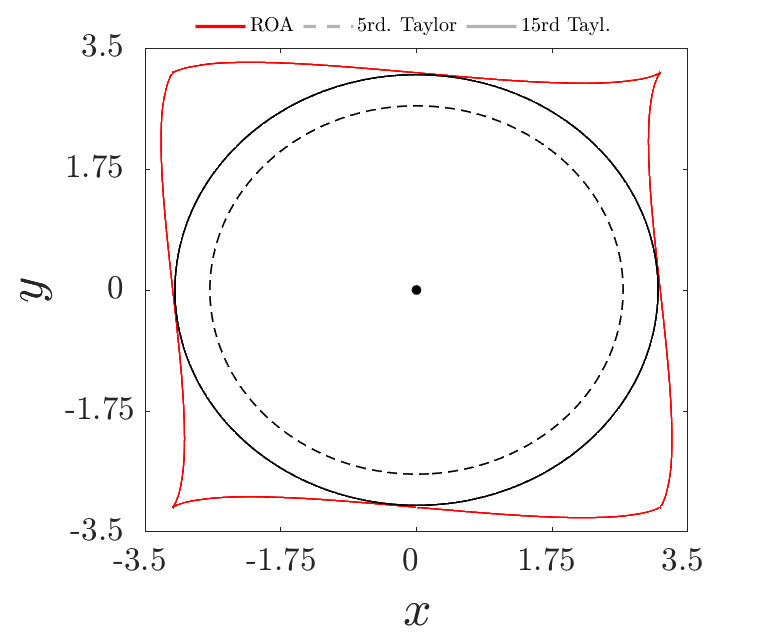}
\caption{Inner approximations of the ROA are obtained for the dynamics (\ref{sys2}). The boundary of the ROA is shown in red. (Left) The approximation of the ROA is obtained with monomial basis functions of degree 5, for a Taylor approximation of the vector field of order 5 (with $c_1=c_2 = 0.7)$) (dashed line) and of order 15 (with $c_1=c_2=2\times 10^{-4}$) (solid line). (Right) The approximation of the ROA is obtained with 9 Gaussian radial basis functions (with $\mathcal{D}_c = [-1,1]^2$ and $\eta = 0.1$), for a Taylor approximation of the vector field of order 5 and 15. The Lyapunov function is computed with the approximated vector field. The orthogonal projection is computed over the set $[-0.1,0.1]^2 \subset \X$.} 
\label{fig:NonPol-ex1}
\end{figure*}

Now, we show the performance of the method in the case of non-polynomials vector fields.
\paragraph{Example 2} Consider the dynamics (see \cite{mauroy2020koopman})
\begin{equation}\label{sys2}
\left\{
\begin{array}{rcl}
\dot{x} & = & K\sin(x-y)-\sin(x), \\
\dot{y} & = & K\sin(y-x)-\sin(y) 
\end{array}\right.\vspace{0.2cm}
\end{equation}
with $K = 0.2$ and $\X = [-3.5,3.5]^2$, which admits a stable origin. The vector field is analytic with an infinite radius of convergence and is naturally approximated through its Taylor expansion. The values $c_i$ appearing in the approximation error \eqref{ci} for each vector field component are obtained in a heuristic way by inspection over the set $\X$. Both monomial basis functions and Gaussian radial basis functions are used. As shown in Figure \ref{fig:NonPol-ex1}, better results are obtained with a higher order Taylor approximation, as expected. The best approximation is obtained with monomials provided that the order of the Taylor approximation is high enough. For low orders of the Taylor approximation, Gaussian basis functions provide better results.

\begin{figure}[t]
\centering
\includegraphics[width=0.43\textwidth]{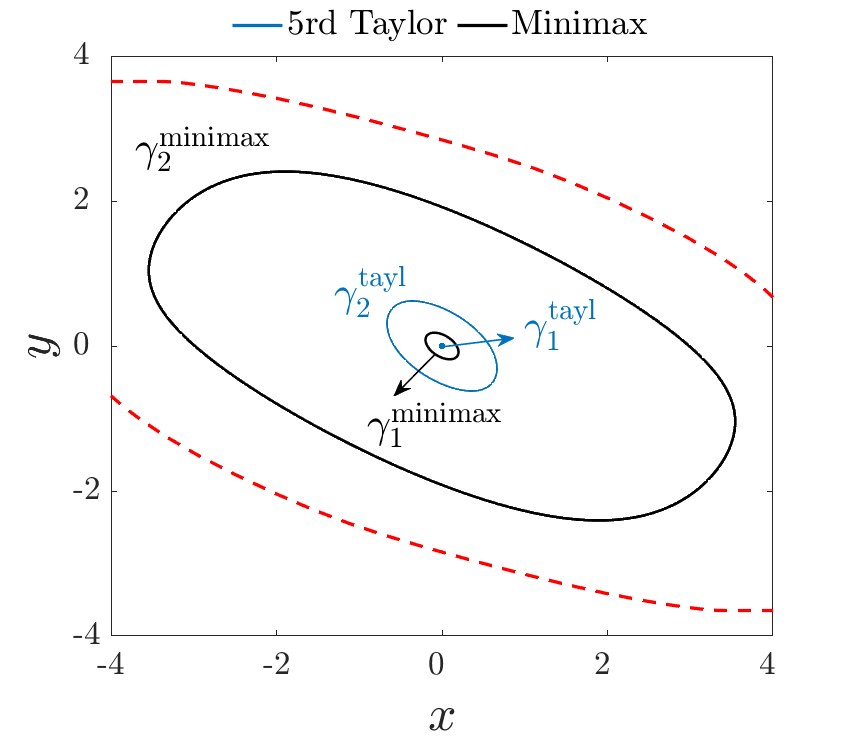}
\caption{Inner approximations of the ROA are obtained for the dynamics (\ref{sys3}). Black curves are the smallest and largest level sets of the Lyapunov function obtained with the minimax approximation of the vector field (polynomials up to degree 12, approximation error set to 0.028). Blue curves are the smallest and largest level sets of the Lyapunov function obtained with a 5th order Taylor expansion of the second component of the vector field (with $c_2=1.6\times10^3$). In both cases, basis monomials are used up to degree 5. By combining both results, the sublevel set $\Omega_{\gamma_{2}^{\mathrm{rem}}}$ provides an inner approximation of the ROA, and of the set of trajectories that remain in $\mathbb{X}$ (dashed red curves). }   
\label{fig:NonPol-ex2}
\end{figure}

\paragraph{Example 3\label{ex3}} 
Consider the dynamics (see \cite{papachristodoulou2005analysis})
\begin{equation}\label{sys3}
\left\{
\begin{array}{rcl}
\dot{x} & = & y, \\[0.3cm]
\dot{y} & = & \dfrac{-(x+y)}{\sqrt{1+(x+y)^2}}
\end{array}\right.\vspace{0.2cm}
\end{equation}
over the set $\X=[-4,4]^2$, which is characterized by a globally stable origin. The ROA of the origin is $\mathbb{R}^2$ but we aim at approximating the largest invariant set in $\mathbb{X}$. The Taylor expansion of the second component of the vector field has a small radius of convergence. 
Therefore, we approximate the vector field using the minimax approximation. The approximation of the ROA lies between the level sets $\gamma_1^{\mathrm{minimax}}$ and $\gamma_2^{\mathrm{minimax}}$(black curves in Figure \ref{fig:NonPol-ex2}). Since $\gamma_1^{\mathrm{minimax}}>0$, one cannot conclude that the trajectories converge to the origin. To overcome this issue, we also use a Taylor expansion of the vector field. In this case, we obtain an approximation of the ROA that lies between the level sets $\gamma_1^{\mathrm{tayl}}$ and $\gamma_2^{\mathrm{tayl}}$ (dashed black curves in Figure \ref{fig:NonPol-ex2}), with $\Omega_{\gamma_1^{\mathrm{tayl}}}=\{0\}$ and $\Omega_{\gamma_1^{\mathrm{minimax}}} \subset \Omega_{\gamma_2^{\mathrm{tayl}}}$. Then, according to Proposition \ref{multiple-lyap}, we can combine both results and conclude that $\Omega_{\gamma_{2}^{\mathrm{minimax}}}$ is an inner approximation of the ROA of the origin. As expected, this approximation tends to capture the set of trajectories that remain inside $\X$ (between the dashed red curves).

\subsection{Towards higher dimensions}\label{sec:HD}
In this section, we provide preliminary results of the application of our method to high-dimensional systems. To do so, we consider the replicator dynamics 
\begin{equation}\label{replicator}
\dot{x}_i = x_i((Ax)_i-x^{\top} A x),
\end{equation}
with $i\in\{1,\ldots,n\}, n\in\{4,5,6,8,10,12,14\}$, and $A\in\mathbb{R}^{n\times n}$. This system is forward invariant in (and thus reduces to) the $(n-1)$-dimensional simplex $S=\{x \in \mathbb{R}^n_+ : x_1+\cdots+x_n=1$\}.
The matrix $A$ is designed so that the system admits at least one equilibrium point at a vertex of the simplex.
Since both validation techniques presented in Section \ref{validation-section} are not tractable here, we assess the performance of the candidate Lyapunov function in an empirical way as follows.
Let $W \triangleq \{q_i\}_{i=1}^{K}$ be a set of sample points uniformly distributed in the region of attraction of the equilibrium. (To do so, sample points are randomly distributed inside the simplex $S$ and those which do not generate trajectories converging toward the equilibrium are discarded.) Next, we define the ratios
\begin{equation*}
    r_1=\frac{\#(\mathcal{S} \cap W)}{\# W}=\frac{\#\{q_i \in W : \dot{V}(q_i) < 0 \}}{K}
\end{equation*}
and 
\begin{equation*}
\begin{split}
    r_2 & =\max_{\gamma_1,\gamma_2}\frac{\#(\Omega_\gamma \cap W)}{\# W} \quad \textrm{s.t. } \overline{\Omega_{\gamma_2} \setminus \Omega_{\gamma_1}} \cap W \subset \mathcal{S},\\
    & =\max_{\gamma_1,\gamma_2}\frac{\#\{q_i \in W : \gamma_1 \leq V(q_i) \leq \gamma_2 \}}{K} \\
    & \qquad \textrm{s.t. } \gamma_1 \leq V(q_i)\leq\gamma_2 \Rightarrow \dot{V}(q_i) <0, \\
    \end{split}
\end{equation*}
where $\#$ denotes the cardinality of a set.

\begin{figure}[b!]
\centering
\includegraphics[width=0.45\textwidth]{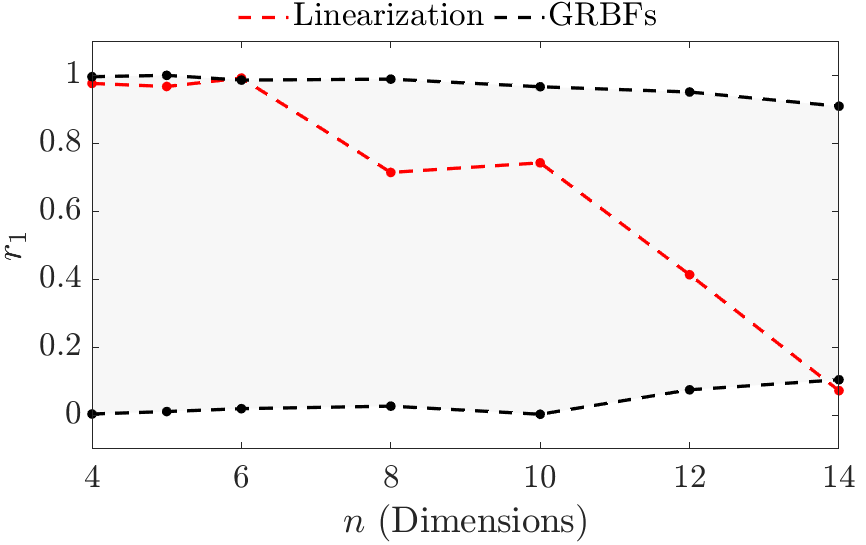}\vspace{0.4cm}
\includegraphics[width=0.45\textwidth]{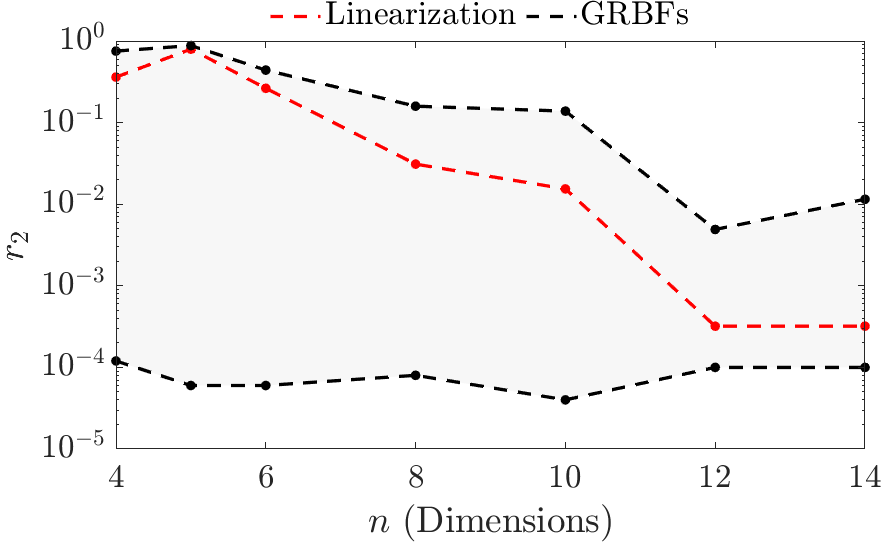}
\caption{Values of $r_1$ (above) and $r_2$ (below) are computed for the replicator dynamics \eqref{replicator} with different dimensions. Our Koopman operator based approach is used with $20$ GRBFs, with randomly distributed centers $c_i$ and parameter $\eta$ (extremal values in black). These results are compared with a quadratic Lyapunov function computed for the linearized dynamics (red curve).} 
\label{fig:HD}
\end{figure}

Clearly, a \textit{true} Lyapunov function should satisfy $r_1 = r_2 = 1$ for any sampling set $W$. The value $r_1$ corresponds to the classical property expected for a Lyapunov function and therefore provides a first glance at the quality of the approximation. The value $r_2$ is directly related to the size of the region of attraction estimated with a level set of the Lyapunov function. Note that the forward-invariance property of $S$ implies that the level sets of $V$ do not need to be closed. For every value $n\in\{4,5,6,8,10,12,14\}$, we have computed several candidate Lyapunov functions with 20 Gaussian radial basis function, where the parameter $\eta$ is sampled over $[0.01, 3]$ and the centers $c_i$ are randomly chosen in the simplexes $\{x\in\R^n | \sum_i x_i \leq a\}$ with $a$ varying in $[0.1, 1]$. Figure \ref{fig:HD} shows the minimum and maximum values of $r_1$ and $r_2$ (dotted black curves), computed with $K = 50000$ sample points. These results are compared with those obtained with the quadratic Lyapunov function $V(x) = x^{\top}Px$ related to the linearized dynamics, where $P  \succ 0$ is the solution of the Riccati equation \mbox{$J_F(0)^{\top}P + PJ_F(0) + I_n = 0$} and $I_n \in\R^{n\times n}$ is the identity matrix. 
For all dimensions, the Koopman operator-based approach shows good performance in terms of the value $r_1$, since a proper optimization over the centers $c_i$ and the parameter $\eta$ allows to reach a value $r_1 \approx 1$. However, the inner approximation of the region of attraction is conservative, in the sense that the method captures a small portion of the region of attraction, and the performance rapidly degrades as the dimension increases. Yet the proposed approach (with optimized parameters) outperforms the quadratic Lyapunov function for higher dimensions, while using only a fairly small number of basis functions. Note finally that these preliminary results do not necessarily extend to any general higher-dimensional systems, but open research perspectives in this direction.\\

\begin{remark}
SOS-based validation was also performed in the above example for a polynomial approximation of $V$ obtained through a Taylor expansion up to order $6$. The parameters $\eta$ and $\{c_i\}_{i}$ were selected as the ones maximizing the value of $r_2$ (see Figure \ref{fig:HD}). In the case $n=4$, the method provides reasonable approximation of $r_2 = 0.6$ ($r_2 = 0.75$ according to our empirical validation). For $n \geq 5$, SOS-based validation was intractable, a result which is consistent with the findings provided in \cite{deka2022koopman}. 
\end{remark}


\section{Conclusion and Perspective}
\label{sec:conclu}

In this paper, we developed a numerical method based on the Koopman operator framework to compute inner approximations of ROAs for general (i.e. non-polynomial) dynamics with general basis functions. Our method relies on polynomial approximation techniques, which are required to obtain semi-algebraic approximations of the validity sets of Lyapunov functions. In particular, we proposed two validation methods (based on SOS programming and on an adaptive grid) providing rigorous stability certificates. Our method was illustrated with different examples, for which good inner approximations of the ROA were obtained, especially with SOS validation techniques.

The present work opens up several perspectives for future research. The proposed method has been developed for general basis functions (e.g. radial basis functions), the use of which could be further explored. In the same line, the main principles of the validation technique based on the adaptive grid does not directly rely on polynomials, in contrast to SOS techniques, and could be further developed to depart from polynomial approximation. Polynomialization techniques \cite{gu2011qlmor,hemery2021compiling} could also be leveraged to circumvent an explicit computation of polynomial approximation error bounds. This approach could potentially allow the use of SOS-based validation techniques, but at the cost of higher-dimensional systems where such methods quickly become intractable. An alternative verification approach based on SMT techniques (e.g. dReal \cite{gao2013dreal}) could also be considered in this direction, but at a high computational cost. Moreover, although the current framework is limited to low-dimensional systems, preliminary results pave the way to its extension to higher dimensions. In this context, a proper selection of basis functions will play a crucial role and novel validation methods will also be needed. Toward this end, the framework developed in this paper could be extended to data-driven settings (e.g. via kernel-based approaches \cite{giesl2016approximation}), along with probabilistic methods (e.g. scenario-based approach \cite{calafiore2006scenario}). 

\bibliography{ref_paper_ROA}

\end{document}